\newtheorem{thm}{Theorem}[section]
\newtheorem{prop}[thm]{Proposition}
\newtheorem{lem}[thm]{Lemma}
\theoremstyle{definition}
\newtheorem{defi}[thm]{Definition}
\newtheorem{rem}[thm]{Remark}
\newcommand{\R}{\mathbb{R}}
\newcommand{\g}{\mathfrak{g}}
\newcommand{\M}{\widetilde{\mathfrak{M}}}
\newcommand{\PM}{\mathfrak{PM}}
\newcommand{\RH}{\mathbb{R} \mathrm{H}}
\newcommand{\1}{\g_{\RH^2} \oplus \mathbb{R}^{n-2}}
\newcommand{\2}{\g_{\RH^{n-1}} \oplus \mathbb{R}}
\newcommand{\A}{\mathbb{R}^{\times}\mathrm{Aut}(\g)}
\newcommand{\Aut}{\mathrm{Aut}(\g)}
\newcommand{\D}{\mathbb{R} \oplus {\rm Der}(\mathfrak{g})}
\newcommand{\Der}{\mathrm{Der}(\g)}
\newcommand{\naiseki}{\langle , \rangle}
\newcommand{\GL}{\mathrm{GL}_n(\R)} 
\newcommand{\OO}{\mathrm{O}(n)} 
\numberwithin{equation}{section}
\author{Takahiro Hashinaga}
\address[T.~Hashinaga]{Department of Mathematics, Hiroshima University, 
Higashi-Hiroshima 739-8526, Japan}
\email{hashinaga@hiroshima-u.ac.jp}
\author{Hiroshi Tamaru}
\address[H.~Tamaru]{Department of Mathematics, Hiroshima University, Higashi-Hiroshima 739-8526, Japan}
\email{tamaru@math.sci.hiroshima-u.ac.jp}
\author{Kazuhiro Terada}
\address[K.~Terada]{Department of Mathematics, Hiroshima University, Higashi-Hiroshima 739-8526, Japan}
\curraddr{SystemSoft Corp., Minato-ku, Tokyo 105-0013, Japan.}
\keywords{Lie groups, 
left-invariant Riemannian metrics,  
Milnor frames, 
Milnor-type theorems, 
Ricci signatures, solvsolitons.}
\thanks{2010 \textit{Mathematics Subject Classification}. 
53C30, 53C25.}
\thanks{
The first author was supported in part by Grant-in-Aid for JSPS Fellows (11J05284). 
The second author was supported in part by KAKENHI (20740040, 24654012).
} 
\title[Milnor-type theorems for left-invariant metrics]
{Milnor-type theorems for left-invariant Riemannian metrics on Lie groups}
\begin{document}

\begin{abstract}
For all left-invariant Riemannian metrics on three-dimensional unimodular Lie groups, 
there exist particular left-invariant orthonormal frames, so-called Milnor frames. 
In this paper, for any left-invariant Riemannian metrics on any Lie groups, 
we give a procedure to obtain an analogous of Milnor frames, 
in the sense that the bracket relations among them can be written with relatively smaller number of parameters. 
Our procedure is based on the moduli space of left-invariant Riemannian metrics. 
Some explicit examples of such frames and applications will also be given. 
\end{abstract}

\maketitle

\section{Introduction\label{sec1}}

For every left-invariant Riemannian metrics 
on three-dimensional unimodular Lie groups, 
Milnor (\cite{Mil}) constructed certain orthonormal basis 
of the corresponding metric Lie algebras. 
Such bases are nowadays called the \textit{Milnor frames}, 
and have played crucial roles in many branches of geometry. 
For example, 
the curvatures of left-invariant Riemannian metrics on such Lie groups 
can be calculated explicitly in terms of the Milnor frames. 
As a consequence, one can determine all possible signatures for the Ricci curvatures 
in this case (\cite{Mil}). 
Furthermore, in terms of Milnor frames, 
one can also study the Ricci flow and Ricci solitons 
(\cite[Chapter~1]{CK}, see \cite{LW2013} for more information and references). 
For left-invariant Einstein and Ricci soliton 
metrics on Lie groups, 
we refer to, 
for example, 
\cite{Heber, Lau09, Lau11, L.W, T05, T08, T11}. 

Since Milnor frames are quite powerful tools, 
it is desired to construct a generalization of Milnor frames 
for other Lie groups, 
which might be useful for studies in many areas. 
Note that Milnor's original arguments strongly depend on dimension three, 
but some generalizations have been known: 
for example, 
Chebarikov (\cite{Ch}), and
Ha and Lee (\cite{HL}) 
studied three-dimensional non-unimodular Lie groups, 
and Kremlev and Nikonorov (\cite{KN1, KN2}) studied four-dimensional cases. 
There are some related studies for nilpotent Lie algebras, 
in the framework of ``Ricci-diagonal basis'', 
which we refer to Payne (\cite{P}), Lauret and Will (\cite{LW2013}), and references therein. 

In this paper, 
we consider an arbitrary Lie group $G$ with Lie algebra $\g$, 
and give a procedure to construct an analogous of Milnor frames. 
More precisely, we give a procedure to obtain the following kind of theorem: 

\begin{itemize} 
\item[] 
For any inner product $\naiseki$ on $\g$, 
there exists an orthonormal basis 
$\{ x_1, \ldots , x_n \}$ with respect to $\naiseki$ (up to scaling) 
such that the bracket relations among them 
can be written with relatively smaller number of parameters. 
\end{itemize} 
In this paper, this kind of theorem is called a \textit{Milnor-type theorem} for $\g$. 
Our procedure will be described in Section~\ref{sec.2}. 
In fact, it 
is based on the moduli space of left-invariant metrics, 
and is a generalization of the methods in \cite{Ch, HL, KN1, KN2}. 

By applying our procedure, in Section~\ref{sec.3}, 
we obtain Milnor-type theorems for two $n$-dimensional Lie algebras, 
namely $\1$ and $\2$. 
Furthermore, 
we study the curvatures of these Lie algebras in Section~\ref{sec.4}. 
In particular, 
we determine all possible signatures for the Ricci curvatures of these Lie algebras, 
and also study whether they admit left-invariant Ricci solitons. 
We note that these two Lie algebras are just toy models, 
but already show that Milnor-type theorems are useful 
for studying some higher-dimensional Lie algebras. 

Finally in this section, 
we emphasize that our procedure can be applied, 
at least theoretically, to an arbitrary Lie algebras. 
For example, by using our procedure, 
Milnor-type theorems for all three-dimensional solvable Lie algebras 
are obtained in the forthcoming paper \cite{HT}. 

The authors would like to thank Yoshio Agaoka and Kazuhiro Shibuya 
for useful comments and suggestions. 
They are also grateful to Akira Kubo for some discussions.  
Finally, the authors would like to thank the referee for pointing out the reference \cite{Ch}.

\section{A general procedure}
\label{sec.2}

In this section, we describe a procedure to obtain 
a Milnor-type theorem for an arbitrary Lie algebra $\g$. 
Our main theorem states that, 
a Milnor-type theorem for $\g$ can be obtained from the moduli space $\PM$ of left-invariant Riemannian metrics. 
Note that the space $\PM$ has been introduced and studied in \cite{KTT}. 

\subsection{Preliminaries}

\label{subsection:pm}

First of all, we recall the moduli space of left-invariant Riemannian metrics. 
We refer to \cite{KTT} for details. 

Let $G$ be a Lie group, and $\g$ be the Lie algebra of $G$. 
We consider the set of all left-invariant Riemannian metrics on $G$, 
which can naturally be identified with 
\begin{align}
\M := \{ \naiseki \mid \mbox{an inner product on $\g$} \} .  
\end{align}
Let $n := \dim \g$, and identify $\g \cong \R^n$ as vector spaces. 
For $\naiseki \in \M$ and $g \in \GL$, we define
\begin{align}
g.\langle \cdot, \cdot \rangle := \langle g^{-1} (\cdot), g^{-1} (\cdot) \rangle . 
\end{align}
This induces a transitive action of $\GL$ on $\M$. 
We thus have an identification 
\begin{align}
\M \cong \GL / \OO . 
\end{align} 
Note that $\M$ is a noncompact Riemannian symmetric space, 
by equipping with a certain $\GL$-invariant metric 
(for example, see \cite[Subsection~4.1]{KTT}). 

In order to define the moduli space $\PM$, 
let us consider the automorphism group and the scalar group: 
\begin{align}
\Aut & := \{ \varphi \in \GL \mid \varphi[\cdot, \cdot] = [\varphi (\cdot), \varphi (\cdot)] \} , \\ 
\mathbb{R}^{\times} & := 
\{ c \cdot \mathrm{id} : \g \rightarrow \g \mid c \in \R \setminus \{ 0 \} \} . 
\end{align}
The group $\A$ naturally acts on $\M$. 
Note that the action of $\R^\times$ gives rise to a scaling, 
and the action of $\Aut$ induces an isometry 
of the corresponding left-invariant metrics. 

\begin{defi}\label{DefPM}
The orbit space of the action of $\A$ on $\M$ is called the 
\textit{moduli space of left-invariant Riemannian metrics}, 
and denoted by 
\begin{align}
\PM := \A \backslash \M . 
\end{align}
\end{defi}

Note that the action of $\A$ on $\M$ is isometric 
with respect to $\GL$-invariant metrics. 
Hence, in order to study $\PM$ 
(for example, possible topological type), 
one can use general theories of isometric actions on symmetric spaces. 

\subsection{A set of representatives} 

Our main theorem states that 
an expression of $\PM$ derives a Milnor-type theorem. 
Here, by an expression of $\PM$, 
we mean a set of representatives is given. 
In this subsection, we formulate a set of representatives. 

Let $\naiseki_0$ be the canonical inner product on $\g \cong \R^n$. 
For simplicity of the notation, 
the orbit of $\A$ through $\naiseki$ is denoted by 
\begin{align}
[\naiseki] := \A . \naiseki 
:= \{ \varphi.\naiseki \mid \varphi \in \A \} . 
\end{align}

\begin{defi}
A subset $U \subset \GL$ is called a 
\textit{set of representatives} of $\PM$ if it satisfies 
\begin{align}
\PM = \{ [h. \naiseki_0] \mid h \in U \} . 
\end{align}
\end{defi}

By a set of representatives, 
we do not mean that it is a complete set of representatives. 
But of course, 
it is expected that $U$ is chosen to be as small as possible. 

We here have a criteria for $U$ to be a set of representatives. 
Let $[[g]]$ denote the double coset of $g \in \GL$, defined by 
\begin{align} 
[[ g ]] := \A \, g \, \OO := \{ \varphi g k \mid \varphi \in \A , \, k \in \OO \} . 
\end{align}

\begin{lem}
\label{lem:representatives}
Let $U \subset \GL$. 
Then the following are mutually equivalent$:$ 
\begin{enumerate} 
\item 
$U$ is a set of representatives of $\PM$. 
\item
For every $g \in \GL$, there exists $h \in U$ such that 
$[h.\naiseki_0] = [g.\naiseki_0]$. 
\item
For every $g \in \GL$, there exists $h \in U$ such that $h \in [[g]]$. 
\end{enumerate} 
\end{lem} 

\begin{proof}
It is easy to see that (1) and (2) are equivalent. 
In order to prove that (2) and (3) are equivalent, 
we have only to show that 
$[h.\naiseki_0] = [g.\naiseki_0]$ and $h \in [[g]]$ are equivalent. 

Assume $[h.\naiseki_0] = [g.\naiseki_0]$. 
Then there exists $\varphi \in \A$ such that 
\begin{align}
h.\naiseki_0 = \varphi.(g.\naiseki_0) = (\varphi g).\naiseki_0 . 
\end{align}
This yields that $(\varphi g)^{-1} h \in \OO$. 
One thus has 
\begin{align}
h = \varphi g ((\varphi g)^{-1} h) \in \A \, g \, \OO = [[g]] . 
\end{align}

Conversely, assume $h \in [[g]]$. 
Hence there exist $\varphi \in \A$ and $k \in \OO$ such that $h = \varphi g k$. 
This yields that 
\begin{align}
h.\naiseki_0 = (\varphi g k).\naiseki_0 = \varphi.(g.\naiseki_0) . 
\end{align}
This shows $[g.\naiseki_0] = [h.\naiseki_0]$. 
\end{proof} 

\subsection{Main theorem}

We are now in the position to prove our main theorem of this paper. 
Namely, a set of representatives of $\PM$ derives a Milnor-type theorem. 
Recall that $\naiseki_0$ is the canonical inner product on $\g \cong \R^n$. 
Denote by $\{ e_1, \ldots, e_n \}$ the canonical orthonormal basis. 

\begin{thm}\label{MTT}
Let $U$ be a set of representatives of $\PM$. 
Then, for every inner product $\naiseki$ on $\g$, 
there exist $h \in U$, $\varphi \in \Aut$, and 
$k>0$ 
such that 
$\{ \varphi h e_1 , \ldots , \varphi h e_n \}$ 
is an orthonormal basis of $\g$ 
with respect to $k \naiseki$. 
\end{thm}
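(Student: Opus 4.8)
The plan is to exploit the precise way in which $\GL$ acts on $\M$. The one fact I would establish first is a dictionary between frames and orbits: for any $a \in \GL$, the image $\{ a e_1, \ldots, a e_n \}$ of the canonical basis is orthonormal with respect to an inner product $\mu$ if and only if $a.\naiseki_0 = \mu$. This is immediate from the defining formula of the action, since $(a.\naiseki_0)(a e_i, a e_j) = \naiseki_0(a^{-1} a e_i, a^{-1} a e_j) = \naiseki_0(e_i, e_j) = \delta_{ij}$, and conversely two inner products that agree on all pairs from a fixed basis coincide. Consequently, to prove the theorem it suffices to produce $\varphi \in \Aut$, $h \in U$, and $k > 0$ with $(\varphi h).\naiseki_0 = k \naiseki$, for then $\{ \varphi h e_1, \ldots, \varphi h e_n \}$ is automatically orthonormal with respect to $k\naiseki$.

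Next I would feed in the hypothesis that $U$ is a set of representatives. Given an arbitrary inner product $\naiseki$, its $\A$-orbit $[\naiseki]$ is a point of $\PM$, so by the very definition of a set of representatives there exists $h \in U$ with $[\naiseki] = [h.\naiseki_0]$ (this is also recorded in Lemma~\ref{lem:representatives}). Unwinding this equality of $\A$-orbits, I obtain some $\psi \in \A$ for which $h.\naiseki_0 = \psi.\naiseki$.

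It then remains to rearrange this into the form required by the dictionary. I would write $\psi = c \varphi$ with $c \in \mathbb{R}^{\times}$ and $\varphi \in \Aut$, which is possible because $\A$ is generated by the commuting subgroups $\mathbb{R}^{\times}$ and $\Aut$. Since the scalar $c$ commutes with every linear map and rescales an inner product by the factor $c^{-2}$, acting on $h.\naiseki_0 = (c\varphi).\naiseki$ by $\varphi^{-1}$ yields $(\varphi^{-1} h).\naiseki_0 = c^{-2}\naiseki$. Setting $k := c^{-2} > 0$ and noting $\varphi^{-1} \in \Aut$, the dictionary from the first step shows that $\{ \varphi^{-1} h e_1, \ldots, \varphi^{-1} h e_n \}$ is orthonormal with respect to $k\naiseki$, which is exactly the asserted statement (with the automorphism $\varphi^{-1}$).

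The argument is essentially bookkeeping, and the only point where I would be careful is this last rearrangement: one must keep track of the inverse in the definition of the action, so that the automorphism which actually works is $\varphi^{-1}$ rather than $\varphi$, and one must check that the scaling factor emerges as $c^{-2}$, which is automatically positive and hence legitimately realizes the ``up to scaling'' freedom by a genuine positive constant $k$. I do not expect any real obstacle beyond this, since all the mathematical content sits in the defining formula of the $\GL$-action together with the definition of a set of representatives.
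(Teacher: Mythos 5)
Your proof is correct and takes essentially the same approach as the paper: both arguments unwind the orbit equality $[\naiseki] = [h.\naiseki_0]$ provided by the set of representatives, write the resulting element of $\A$ as a scalar times an automorphism, and use the defining formula of the $\GL$-action on $\M$ to read off orthonormality with respect to a positive multiple of $\naiseki$. The only differences are bookkeeping: you isolate the action computation as a preliminary ``dictionary'' and unwind the orbit equality in the opposite direction, so your automorphism and constant come out as $\varphi^{-1}$ and $k = c^{-2}$, where the paper's come out as $\varphi$ and $k = c^{2}$.
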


\begin{proof}
Take any inner product $\naiseki$ on $\g$. 
Since $U$ is a set of representatives of $\PM$, 
there exists $h \in U$ such that 
\begin{align}
[\naiseki] = [h.\naiseki_0] . 
\end{align} 
Recall that $[ \cdot ]$ denotes the orbit of $\A$. 
Hence, there exist $c \in \R^\times$ and $\varphi \in \Aut$ 
such that
\begin{align}
\naiseki = (c \varphi).(h.\naiseki_0) = (c \varphi h).\naiseki_0 . 
\end{align} 
Take any $i, j$. 
Then, it follows from the definition of the action that 
\begin{align}
\begin{split}
\langle 
\varphi h e_i , \varphi h e_j 
\rangle 
& = (c \varphi h). \langle \varphi h e_i , \varphi h e_j \rangle_0 \\ 
& = \langle (c \varphi h)^{-1} \varphi h e_i , 
(c \varphi h)^{-1} \varphi h e_j \rangle_0 \\ 
& = c^{-2} \delta_{i j} .  
\end{split}
\end{align}
Hence, by putting $k := c^2 > 0$, we complete the proof. 
\end{proof}

When we apply this theorem for a given Lie algebra $\g$, 
we put $x_i := \varphi h e_i$ and study the bracket relations among them. 
Note that $\varphi$ preserves the bracket product. 
Hence, if $U$ contains only $l$ parameters, 
then so do the bracket relations among $\{ x_1 , \ldots , x_n \}$. 
This is a procedure to obtain Milnor-type theorems. 
Examples of Milnor-type theorems will be given in the following section.

\section{Examples of Milnor-type theorems}
\label{sec.3}

In this section, 
we apply our main theorem to two particular Lie algebras, namely $\1$ and $\2$, 
and derive Milnor-type theorems for them. 

\subsection{Preliminaries}

In this subsection, we recall known facts on the Lie algebras $\1$ and $\2$. 
We refer to \cite{KTT}. 

Recall that a Lie algebra of dimension $k>2$ is called the 
\textit{Lie algebra of the real hyperbolic space} $\RH^k$, 
denoted by $\g_{\RH^k}$, if it has a basis $\{ e_1, \ldots, e_k \}$ 
whose bracket relations are given by 
\begin{align}
[e_1, e_i] = e_i \quad (i =2, \ldots, k) . 
\end{align}
We consider the direct sums of such Lie algebras and abelian Lie algebras. 
For the later convenience, we use the following bases: 
\begin{align}\label{1}
\begin{split}
\1 & 
= \mathrm{span}_{\mathbb{R}} \{ e_1, \ldots , e_n \mid [e_1, e_2] = e_2 \} , \\ 
\2 & 
= \mathrm{span}_{\mathbb{R}} \{ e_1, \ldots , e_n \mid [e_1, e_i] = e_i \ 
(i=3, \ldots, n) \} . 
\end{split}
\end{align}

In order to express $\PM$, one needs $\A$. 
Hence, let us study its Lie algebra $\D$. 
Recall that 
\begin{align}
\mathrm{Der}(\g) &:= \{ D \in \mathfrak{gl}(\g) \mid 
D[\cdot, \cdot] = [D(\cdot), \cdot] + [\cdot, D(\cdot)] \}, \\ 
\mathbb{R} & := \{ c \cdot \mathrm{id} : \g \rightarrow \g \mid c \in \mathbb{R} \}.
\end{align}

Note that the above two Lie algebras have been studied in \cite{KTT}. 
In fact, the derivation algebras $\mathrm{Der}(\g)$ have been described 
in the proof of \cite[Proposition~4.6]{KTT}. 
It then shows, under a suitable change of basis, the following 
(one can also check it by direct calculations). 
Recall that we identify $\g \cong \R^n$. 
We say that a linear map $\varphi : \g \to \g$ has a matrix expression $A$ 
with respect to a basis $\{ x_1, \ldots, x_n \}$ if it satisfies 
\begin{align} 
(\varphi(x_1) , \ldots, \varphi(x_n)) = (x_1, \ldots, x_n) A . 
\end{align} 

\begin{lem}[{cf.\ \cite[Proposition~4.6]{KTT}}]
\label{Der}
Let $\g = \1$ or $\2$. 
Then, with respect to the bases in (\ref{1}), 
we have the following matrix expressions$:$
\begin{align}
\displaystyle \Der = \left\{ \left(
\begin{array}{cc|ccc}
0 & 0 & 0 & \cdots & 0 \\
* & * & 0 & \cdots & 0 \\ \hline
* & 0 & & & \\
\vdots & \vdots & & * & \\
* & 0 & & &
\end{array}
\right) \right\} . 
\end{align}
\end{lem}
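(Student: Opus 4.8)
The plan is to compute $\Der$ directly from the derivation identity, exploiting the fact that both $\1$ and $\2$ have only very few nonzero structure constants, so that testing the identity on pairs of basis vectors collapses into a handful of linear equations on the entries of a candidate matrix. Throughout I would write a general linear map $D$ as a matrix $(D_{ij})$ with respect to the basis in (\ref{1}), so that $D(e_j) = \sum_i D_{ij} e_i$, and use that $D$ is a derivation if and only if $D[e_i, e_j] = [D e_i, e_j] + [e_i, D e_j]$ holds for all $i < j$ (the cases $i=j$ and $i>j$ being automatic by antisymmetry). The task is then twofold: establish the inclusion $\subseteq$ by extracting the necessary constraints on the $D_{ij}$, and then the reverse inclusion $\supseteq$ by checking that every matrix of the displayed shape does satisfy the identity.

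For $\g = \1$, the only nonzero bracket is $[e_1, e_2] = e_2$. First I would feed this relation into the derivation identity: since $[e_k, e_2]$ is nonzero only for $k = 1$ and $[e_1, e_k]$ only for $k = 2$, the right-hand side reduces to $(D_{11} + D_{22}) e_2$, while the left-hand side is $D(e_2) = \sum_k D_{k2} e_k$. Comparing coefficients forces $D_{11} = 0$ together with $D_{k2} = 0$ for all $k \neq 2$, so the first row begins with $0$ and the second column is supported only on its diagonal entry. Next I would run the identity on the vanishing brackets $[e_1, e_j] = 0$ and $[e_2, e_j] = 0$ for $j \geq 3$; because $e_3, \ldots, e_n$ are central, these isolate respectively the entries $D_{2j}$ and $D_{1j}$ and force them to vanish for $j \geq 3$. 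Combined with $D_{11} = D_{12} = 0$, this empties the entire first row and confines the second row to its first two entries, while the brackets $[e_i, e_j] = 0$ with $i, j \geq 2$ yield nothing further; hence the first column below the diagonal and the lower-right $(n-2) \times (n-2)$ block remain unconstrained, giving exactly the displayed shape.

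For $\g = \2$ the argument is parallel, now driven by $[e_1, e_i] = e_i$ for $i \geq 3$ and by the centrality of $e_2$. Testing the identity on $[e_1, e_i] = e_i$ and comparing coefficients yields $D_{11} = 0$ and $D_{1i} = D_{2i} = 0$ for each $i \geq 3$; testing on $[e_1, e_2] = 0$ and on $[e_2, e_j] = 0$ for $j \geq 3$ then kills the remaining column-two entries $D_{k2}$ ($k \geq 3$) and the entry $D_{12}$. As before, the first row vanishes entirely, the second column is supported on its diagonal, and the brackets among $e_3, \ldots, e_n$ impose no condition, so the same normal form emerges. I would remark that the two computations terminate at the identical matrix shape.

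The genuine content, and the step most prone to error, is not any single equation but the bookkeeping that guarantees completeness: one must confirm that every one of the $\binom{n}{2}$ basis brackets has been accounted for, so that no hidden constraint on the $*$-entries is overlooked. This is precisely what secures the reverse inclusion $\supseteq$, which I would verify directly by checking that an arbitrary matrix of the stated form satisfies the derivation identity on each basis pair; concretely, this amounts to observing that the free lower-right block realizes an arbitrary endomorphism of the abelian ideal $\mathrm{span}\{e_3, \ldots, e_n\}$, while the free first column encodes the freedom of sending $e_1$ into $\mathrm{span}\{e_2, \ldots, e_n\}$. Finally, I would note that the bases in (\ref{1}) are the ones obtained after the change of basis used in the proof of \cite[Proposition~4.6]{KTT}, so that the matrix expressions recorded here agree with those asserted there.
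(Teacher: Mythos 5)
Your proof is correct, but it is not the route the paper takes: the paper gives no computation at all, instead deriving the lemma from the description of the derivation algebras in the proof of \cite[Proposition~4.6]{KTT} (given there for a different choice of basis) and merely remarking that ``one can also check it by direct calculations.'' Your argument is exactly that direct calculation, carried out in full, and it is complete: since the derivation identity is bilinear and antisymmetric, checking it on all pairs $(e_i,e_j)$ with $i<j$ suffices; for $\1$ the pair $(e_1,e_2)$ forces $D_{11}=0$ and $D_{k2}=0$ for $k\neq 2$, the pairs $(e_1,e_j)$ and $(e_2,e_j)$ with $j\geq 3$ force $D_{2j}=0$ and $D_{1j}=0$ respectively, and the remaining pairs give nothing new; the analogous bookkeeping works for $\2$, and the reverse inclusion holds because any matrix of the displayed shape satisfies the identity on every basis pair. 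What each approach buys: the paper's citation is shorter and ties these two algebras to the classification in \cite{KTT}, while yours is self-contained and in particular substantiates the otherwise unverified claim that the description in \cite{KTT} takes the displayed form ``under a suitable change of basis.'' One small imprecision to correct: for $\2$ the pairs $(e_i,e_j)$ with $i,j\geq 3$ are not constraint-free --- they yield $D_{1i}e_j - D_{1j}e_i = 0$ --- but these conditions are already implied by the $D_{1i}=0$ you obtained from the pairs $(e_1,e_i)$, so your conclusion is unaffected.
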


It is remarkable that these Lie algebras have the same $\D$. 
This is a reason for the choice of the bases in (\ref{1}). 

\subsection{A set of representatives } 

In this subsection, 
we give a set of representatives of $\PM$ for $\g = \1$ and $\2$. 
Let $(\A)^0$ be the connected component of $\A$ containing the identity. 
By Lemma \ref{Der}, one knows $\D$ for our Lie algebras. 
Hence, by exponentiating it, we obtain 
\begin{align}\label{Aut}
\displaystyle (\A)^0 
= \left\{ \left(
\begin{array}{cc|ccc}
x_1 & 0 & 0 & \cdots & 0 \\
\ast & x_2 & 0 & \cdots & 0 \\ \hline
\ast & 0 & & & \\
\vdots & \vdots & & B & \\
\ast & 0 & & &
\end{array}
\right) \mid x_1, x_2>0, \ \det B > 0 \right\} . 
\end{align}

The next proposition gives a set of representatives of $\PM$. 
Let $\naiseki_0$ be the inner products 
such that the bases in (\ref{1}) are orthonormal. 
Denote by $I_n$ the identity matrix, and by $E_{i,j}$ the matrix 
whose $(i,j)$-entry is $1$ and others are $0$. 

\begin{prop}\label{pm}
Let $\g = \1$ or $\2$. 
Then the following $U$ is a set of representatives of $\PM${$:$}
\begin{align}\label{ExPM}
U := \{ g_{\lambda} := I_n - \lambda E_{n,2} \mid \lambda \geq 0 \} . 
\end{align}
\end{prop}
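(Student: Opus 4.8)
The plan is to verify condition~(3) of Lemma~\ref{lem:representatives}: it suffices to show that every double coset $\A\,g\,\OO$ contains some $g_\lambda$ with $\lambda\ge 0$, i.e.\ that for each $g\in\GL$ one can produce $\varphi\in\A$ and $k\in\OO$ with $\varphi g k=g_\lambda$. I would carry this out as a normal-form reduction, exploiting that left multiplication by $\A$ realizes a restricted family of row operations (those read off from (\ref{Aut})) while right multiplication by $\OO$ realizes orthogonal column operations. Since, by Lemma~\ref{Der}, the algebras $\1$ and $\2$ share the same $\D$, hence the same $(\A)^0$, the whole reduction can be phrased purely in terms of the matrix group in (\ref{Aut}), and is therefore uniform for both algebras; note also that we only need each coset to \emph{meet} $U$, not that distinct $\lambda$ give distinct classes.

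First I would use the right $\OO$-action to triangularize: applying the $QR$-decomposition to $g^{\mathrm{T}}$ yields $k\in\OO$ with $gk$ lower triangular of positive diagonal, so we may assume $g$ itself is lower triangular with positive diagonal entries $b_{ii}$. Reading off (\ref{Aut}), left multiplication by $\A$ then lets me scale row $1$ (the $\R^{\times}$-factor), add arbitrary multiples of row $1$ to every other row, scale row $2$, and apply an arbitrary $\mathrm{GL}_{n-2}(\R)$-transformation to the abelian block of rows $3,\dots,n$. I would subtract multiples of row $1$ to clear the first column below the $(1,1)$-entry, and then treat the abelian block $\beta=(b_{ij})_{3\le i,j\le n}$ together with its coupled column-$2$ data $v=(b_{32},\dots,b_{n2})$.

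The heart of the argument, and the step I expect to be the main obstacle, is this simultaneous reduction of $\beta$ and $v$. Here I would combine the $\mathrm{GL}_{n-2}(\R)$ freedom on rows $3,\dots,n$ with the still-available orthogonal freedom among columns $3,\dots,n$ (a block factor $\mathrm{diag}(I_2,R)\in\OO$, $R\in\mathrm{O}(n-2)$, which leaves rows $1,2$ and column $2$ untouched): using $\beta\mapsto\beta^{-1}\beta$ I normalize the block to $I_{n-2}$, sending $v\mapsto w:=\beta^{-1}v$; the requirement of \emph{preserving} $\beta=I_{n-2}$ then forces the remaining joint moves to act on $w$ exactly through $\mathrm{SO}(n-2)$, whose transitivity on spheres (for $n-2\ge 2$; the case $n=3$ being trivial) rotates $w$ onto the last coordinate, leaving a single entry $\mu$ at position $(n,2)$. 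A diagonal rescaling by the $\R^{\times}$-factor together with the diagonal automorphisms $\mathrm{diag}(1,s_2,\dots,s_n)$ normalizes the diagonal to $1$ without disturbing $\mu$ (the row-$n$ factor being taken equal to $1$). For $n-2\ge 2$ the sphere transitivity already permits choosing the sign $\mu\le 0$; only when $n=3$, where the abelian block is one-dimensional and no rotation is available, must I separately correct the sign, which I would do via the automorphism $\mathrm{diag}(1,\dots,1,-1)$—an automorphism of both $\1$ and $\2$, since negating the last basis vector respects the relations in (\ref{1})—combined with the orthogonal reflection negating the last column. Setting $\lambda:=|\mu|\ge 0$ then gives $I_n-\lambda E_{n,2}=g_\lambda$, so that $g_\lambda\in\A\,g\,\OO$, which by Lemma~\ref{lem:representatives} shows $U$ is a set of representatives of $\PM$.
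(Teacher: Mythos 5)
Your proposal is correct and follows essentially the same route as the paper: triangularize $g$ by the right $\OO$-action, use the block structure of (\ref{Aut}) to perform the restricted row operations, and then conjugate by a block matrix $\mathrm{diag}(I_2,B)$ to push the residual column-$2$ vector onto the last coordinate, exactly as in the paper's chain $g \mapsto g^{(1)} \mapsto g^{(2)} \mapsto g_{\lambda}$. Your treatment is in fact slightly more careful at one point: the paper asserts the existence of $B \in \mathrm{SO}(n-2)$ with $Bv_2 = {}^t(0,\ldots,0,-\lambda)$, $\lambda \geq 0$, which fails when $n=3$ (where $\mathrm{SO}(1)$ is trivial) if $v_2 > 0$, and your explicit reflection $\mathrm{diag}(1,\ldots,1,-1)$ (checked directly to be an automorphism of both algebras, though it lies outside $(\A)^0$) repairs precisely this case.
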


\begin{proof}
We prove this simultaneously for $\g = \1$ and  $\2$, 
since they have the same $\D$. 
Take any $g \in \GL$. 
By Lemma~\ref{lem:representatives}, 
we have only to prove that 
\begin{eqnarray}
\label{eq:pm-expression}
\exists 
g_{\lambda} \in U : g_{\lambda} \in [[ g ]] . 
\end{eqnarray} 
First of all, 
from linear algebra, there exists $\varphi_1 \in \mathrm{O}(n)$ such that 
$g \varphi_1$ is lower triangular, 
and all diagonal entries are positive. 
We denote this by 
\begin{align} 
\left( 
\begin{array}{c|c} 
A_1 & 0 \\ \hline 
A_3 & A_4 
\end{array} 
\right) 
:= g \varphi_1 \in [[g]] , 
\end{align} 
where $A_1 \in \mathrm{GL}_2(\R)$ and $A_4 \in \mathrm{GL}_{n-2}(\R)$. 
Note that $A_1$ and $A_4$ are lower triangular, 
and all diagonal entries are positive. 
Then, it follows from (\ref{Aut}) that 
\begin{align}
\varphi _2 := \left(
\begin{array}{c|c}
A_1^{-1} & 0 \\ \hline
0 & A_4^{-1}
\end{array}
\right) \in \A . 
\end{align}
This gives  
\begin{align}
[[g]] \ni \varphi _2 g \varphi _1 = \left(
\begin{array}{c|c}
I_2 & 0 \\ \hline
A_4^{-1} A_3 & I_{n-2}
\end{array}
\right) =: g^{(1)} . 
\end{align}
We put $(v_1, v_2) := A_4^{-1} A_3$, where $v_1, v_2 \in \R^{n-2}$. 
Then again (\ref{Aut}) yields that 
\begin{align}
[[g]] \ni 
\left(
\begin{array}{cc|c}
1 & 0 & 0 \\
0 & 1 & 0 \\ \hline
-v_1 & 0 & I_{n-2} 
\end{array} 
\right) 
g^{(1)} = \left(
\begin{array}{cc|c}
1 & 0 & 0 \\
0 & 1 & 0 \\ \hline
0 & v_2 & I_{n-2} 
\end{array} 
\right) =: g^{(2)} . 
\end{align}
Here one know that there exist $B \in \mathrm{SO}(n-2)$ and $\lambda \geq 0$ 
such that 
\begin{align} 
B v_2
= {}^t ( 0 , \ldots , 0 , - \lambda ) . 
\end{align}
Note that 
\begin{align}
\varphi_3 
:= \left(
\begin{array}{c|c}
I_2 & 0 \\ \hline
0 & B
\end{array}
\right) \in (\A) \cap \mathrm{O}(n) . 
\end{align} 
This concludes that 
\begin{align}
[[g]] \ni \varphi_3 g^{(2)} \varphi_3^{-1} = g_{\lambda} , 
\end{align} 
which completes the proof of the proposition. 
\end{proof}

\subsection{Examples of Milnor-type theorems} 

In this subsection, we obtain Milnor-type theorems for our two Lie algebras. 
Here we need to study them individually. 
We start with the case $\g = \1$. 

\begin{prop}\label{EMTT}
Let $\g=\1$. 
Then, for every inner product $\naiseki$ on $\g$, 
there exist $\lambda \geq 0$, $k > 0$, 
and an orthonormal basis $\{x_1, \ldots , x_n\}$ with respect to $k \naiseki$ 
such that the bracket relations are given by 
\begin{align}\label{bp}
[x_1, x_2] = x_2 + \lambda x_n . 
\end{align}
\end{prop}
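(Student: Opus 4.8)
The plan is to combine Theorem~\ref{MTT} with the explicit set of representatives produced in Proposition~\ref{pm} and then simply read off the structure constants. First I would apply Theorem~\ref{MTT} to the set $U = \{ g_\lambda \mid \lambda \geq 0 \}$: for any inner product $\naiseki$ on $\g = \1$ there exist $\lambda \geq 0$, $\varphi \in \Aut$, and $k>0$ such that $x_i := \varphi g_\lambda e_i$ form an orthonormal basis with respect to $k\naiseki$. Thus the orthonormality and the values of $\lambda, k$ are already handed to us by the two earlier results, and the entire remaining task is to compute the bracket relations among the $x_i$.

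The key observation, already noted in the paragraph following Theorem~\ref{MTT}, is that $\varphi \in \Aut$ preserves the bracket product, hence preserves structure constants: if $[g_\lambda e_i, g_\lambda e_j] = \sum_k c_{ij}^k\, (g_\lambda e_k)$, then applying $\varphi$ gives $[x_i, x_j] = \sum_k c_{ij}^k\, x_k$ with the \emph{same} constants $c_{ij}^k$. So I can discard $\varphi$ entirely and work with the intermediate frame $y_i := g_\lambda e_i$. Reading off the columns of $g_\lambda = I_n - \lambda E_{n,2}$ yields $y_1 = e_1$, $y_2 = e_2 - \lambda e_n$, and $y_j = e_j$ for $j \geq 3$; in particular $y_n = e_n$. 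Since the only nonzero bracket of $\1$ is $[e_1, e_2] = e_2$ and $[e_1, e_n] = 0$, all brackets $[y_i, y_j]$ vanish except
\begin{align}
[y_1, y_2] = [e_1, e_2 - \lambda e_n] = e_2 .
\end{align}
Re-expressing the right-hand side in the new frame via $y_2 = e_2 - \lambda y_n$, i.e.\ $e_2 = y_2 + \lambda y_n$, I obtain $[y_1, y_2] = y_2 + \lambda y_n$, and transporting by $\varphi$ gives $[x_1, x_2] = x_2 + \lambda x_n$ with all other brackets zero, which is exactly (\ref{bp}).

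The computation is short, and in fact all the genuine content is carried by the two prior results. The only point requiring a little care is the dual role played by $\lambda$: because $[e_1, e_n] = 0$, the $-\lambda e_n$ term in $y_2$ does \emph{not} contribute to the bracket $[e_1, y_2]$, yet $\lambda$ reappears when the answer $e_2$ is rewritten in the frame $\{y_i\}$ as $y_2 + \lambda y_n$. Keeping the old basis $\{e_i\}$ and the new frame $\{y_i\}$ carefully distinguished — rather than silently identifying them — is thus the main thing to watch; everything else is immediate from the structure of $g_\lambda$ and of $\1$.
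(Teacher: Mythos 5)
Your proposal is correct and follows essentially the same route as the paper's own proof: invoke Theorem~\ref{MTT} with the set of representatives from Proposition~\ref{pm}, compute the brackets of the intermediate frame $g_\lambda e_i$ using $[e_1,e_2]=e_2$ and $g_\lambda e_2 = e_2 - \lambda e_n$, rewrite $e_2 = g_\lambda e_2 + \lambda g_\lambda e_n$, and transport by $\varphi \in \Aut$. The point you flag about carefully distinguishing the frames $\{e_i\}$ and $\{y_i\}$ is exactly the step the paper carries out in its displayed computation, so there is no gap.
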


\begin{proof}
Let $\{ e_1 , \ldots , e_n \}$ be the canonical basis of $\g$ defined in (\ref{1}), 
and $\naiseki_0$ be the inner product so that this basis is orthonormal. 
Recall that, by Proposition~\ref{pm}, the set 
\begin{align}
\label{eq:U}
U = \{ g_{\lambda} = I_n - \lambda E_{n,2} \mid \lambda \geq 0 \} 
\end{align}
is a set of representatives of $\PM$. 
Take any inner product $\naiseki$ on $\g$. 
By Theorem~\ref{MTT}, 
there exist $g_{\lambda} \in U$, $k>0$, and $\varphi \in \Aut$ such that 
$\{ \varphi g_{\lambda} e_1 , \ldots , \varphi g_{\lambda} e_n \}$ 
is orthonormal with respect to $k \naiseki$. 
We put $x_i := \varphi g_{\lambda} e_i$ for 
$i \in \{ 1, \ldots , n \}$. 
It is clear that the basis $\{ x_1, \ldots, x_n \}$ is orthonormal 
with respect to $k \naiseki$. 
Hence, we have only to check the bracket relations among them. 
Note that 
\begin{align}\label{ge}
g_\lambda e_i 
= \left\{
\begin{array}{@{\,}ll}
e_i & (i \neq 2) , \\
e_2 - \lambda e_n & (i = 2) . 
\end{array}
\right. 
\end{align}
Recall that $[e_1, e_2] = e_2$ is the only nonzero bracket relation
with respect to $\{ e_1, \ldots, e_n \}$. 
We thus obtain 
\begin{align} 
[g_\lambda e_1 , g_\lambda e_2] = [e_1 , e_2 - \lambda e_n] = e_2 
= g_\lambda e_2 + \lambda g_\lambda e_n . 
\end{align} 
Since $\varphi \in \Aut$, we obtain 
\begin{align} 
[x_1, x_2] 
= [\varphi g_\lambda e_1 , \varphi g_\lambda e_2] 
= \varphi [g_\lambda e_1 , g_\lambda e_2] 
= x_2 + \lambda x_n . 
\end{align} 
It remains to show that this is the only nonzero bracket relation. 
Take any $i<j$, and assume that $j \geq 3$. 
Then one has 
\begin{align} 
[g_\lambda e_i , g_\lambda e_j] = [g_\lambda e_i , e_j] = 0 . 
\end{align} 
This yields $[x_i, x_j] = 0$, which completes the proof. 
\end{proof}

We next study the case $\g = \2$. 
The argument is the same as the former case.  

\begin{prop}\label{EMTT2}
Let $\g=\2$. 
Then, for every inner product $\naiseki$ on $\g$, there exist 
$\lambda \geq 0$, 
$k > 0$, 
and an orthonormal basis $\{ x_1, \ldots, x_n \}$ with respect to $k \naiseki$ 
such that the bracket relations are given by 
\begin{align}
[x_1, x_2] = -\lambda x_n , \quad [x_1, x_i] = x_i \quad 
(\mbox{for $i \in \{ 3, \ldots , n \}$}) . 
\end{align}
\end{prop}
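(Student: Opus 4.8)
The plan is to mirror the proof of Proposition~\ref{EMTT} exactly, since by the remark after Lemma~\ref{Der} the two Lie algebras $\1$ and $\2$ share the same $\D$, hence the same $(\A)^0$, and Proposition~\ref{pm} already supplies the \emph{same} set of representatives $U = \{ g_\lambda = I_n - \lambda E_{n,2} \mid \lambda \geq 0 \}$ for both. So the only genuine work is to recompute the bracket relations for the new structure constants of $\2$, where the nonzero brackets are $[e_1, e_i] = e_i$ for $i \in \{3, \ldots, n\}$ rather than the single relation $[e_1, e_2] = e_2$.

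First I would invoke Theorem~\ref{MTT} with this $U$: for any inner product $\naiseki$ there exist $g_\lambda \in U$, $k > 0$, and $\varphi \in \Aut$ so that $\{ \varphi g_\lambda e_1, \ldots, \varphi g_\lambda e_n \}$ is orthonormal with respect to $k\naiseki$. Setting $x_i := \varphi g_\lambda e_i$ reduces everything to computing brackets among the $g_\lambda e_i$ and then transporting through $\varphi$, which preserves the bracket. The action of $g_\lambda$ on the basis is identical to before, namely $g_\lambda e_i = e_i$ for $i \neq 2$ and $g_\lambda e_2 = e_2 - \lambda e_n$, so the computation only needs the $\2$ bracket table applied to these vectors.

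The key computation splits into the relevant cases. For $[g_\lambda e_1, g_\lambda e_2]$ I would write $[e_1, e_2 - \lambda e_n] = -\lambda [e_1, e_n] = -\lambda e_n$, using that $[e_1, e_2] = 0$ in $\2$ while $[e_1, e_n] = e_n$; since $g_\lambda e_n = e_n$, this gives $[x_1, x_2] = -\lambda x_n$ after applying $\varphi$. For $i \in \{3, \ldots, n\}$ one has $[g_\lambda e_1, g_\lambda e_i] = [e_1, e_i] = e_i = g_\lambda e_i$, yielding $[x_1, x_i] = x_i$. Finally, for any remaining pair $x_i, x_j$ not involving $e_1$ I would check that the bracket vanishes: if neither index is $1$, then at most one factor carries the $-\lambda e_n$ correction, and since $e_n$ brackets trivially with every $e_j$ for $j \neq 1$, all such brackets are zero.

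I do not expect a serious obstacle here, as the structure is entirely parallel to Proposition~\ref{EMTT}; the only point requiring a little care is the sign and the index bookkeeping in the case $[g_\lambda e_1, g_\lambda e_2]$, where the $e_2$-term now contributes nothing (because $[e_1, e_2] = 0$ in $\2$) and the entire bracket comes from the $-\lambda e_n$ correction term, producing the minus sign in $[x_1, x_2] = -\lambda x_n$. One should also confirm there are no spurious brackets introduced among $x_3, \ldots, x_n$, but since those basis vectors are fixed by $g_\lambda$ and span an abelian subalgebra in $\2$, this is immediate.
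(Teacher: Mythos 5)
Your proposal is correct and follows essentially the same route as the paper: invoke Theorem~\ref{MTT} with the set of representatives $U$ from Proposition~\ref{pm} (valid for both Lie algebras since they share the same $\D$), compute $[g_\lambda e_1, g_\lambda e_2] = [e_1, e_2 - \lambda e_n] = -\lambda e_n = -\lambda g_\lambda e_n$ and $[g_\lambda e_1, g_\lambda e_i] = e_i = g_\lambda e_i$ for $i \geq 3$, note all remaining brackets vanish, and transport through $\varphi \in \Aut$. The paper's proof is just a terser version of exactly this argument, including your observation that (\ref{ge}) carries over unchanged.
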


\begin{proof}
Let $\{ e_1 , \ldots , e_n \}$ be the canonical basis of $\g$ defined in (\ref{1}). 
Let $g_{\lambda} \in U$. 
Note that (\ref{ge}) also holds for this case. 
Then, one has 
\begin{align} 
\begin{split} 
[g_\lambda e_1 , g_\lambda e_2] 
& = [e_1 , e_2 - \lambda e_n] 
= - \lambda e_n 
= - \lambda g_\lambda e_n , \\ 
[g_\lambda e_1 , g_\lambda e_i] 
& = [e_1 , e_i] 
= e_i 
= g_\lambda e_i 
\end{split} 
\end{align} 
for $i \in \{ 3, \ldots , n \}$, 
and others are equal to zero. 
By applying $\varphi \in \Aut$, one can complete the proof. 
\end{proof}

\begin{rem} 
For the case of $n=3$, 
the Lie algebra $\g_{\RH^2} \oplus \R$ has been studied in \cite{HL}. 
In the proofs of \cite[Lemma~5.3, Theorem~5.6]{HL}, they showed the following: 
for every inner product $\naiseki$ on $\g_{\RH^2} \oplus \R$, 
there exist $\mu, \nu >0$ and an orthonormal basis $\{ Y_1, Y_2, Y_3 \}$ 
whose bracket relations are given by 
\begin{align} 
[Y_1, Y_2] = (\sqrt{\mu} / \sqrt{\nu}) Y_3 , \quad 
[Y_1, Y_3] = (2 / \sqrt{\nu}) Y_3 , 
\end{align} 
or, 
by putting $\kappa := 1 / (2 \sqrt{\nu})$, 
\begin{align} 
[Y_1, Y_2] = \kappa Y_2 - \sqrt{3} \, \kappa Y_3 , \quad 
[Y_1, Y_3] = - \sqrt{3} \, \kappa Y_2 + 3 \kappa Y_3 . 
\end{align} 
By comparing with these relations, 
we could say that 
the case of $n=3$ of our Milnor-type theorem has a simpler expression. 
The Lie algebra $\g_{\RH^2} \oplus \R$ has been also studied in \cite{Ch}. 
For more details, we refer to \cite[Lemma~5]{Ch}.
\end{rem} 

\begin{rem} 
For the case of $n=4$, 
the geometry of left-invariant Riemannian metrics 
have been studied in \cite{KN1, KN2}. 
For the Lie algebra $\g_{\RH^2} \oplus \R^2$, in \cite[Lemma~6]{KN2}, they showed the following: 
for every inner product $\naiseki$ on $\g_{\RH^2} \oplus \R^2$, 
there exist $a>0$, $b \geq 0$, 
and an orthonormal basis $\{ f_1, f_2, f_3 \}$ 
whose bracket relations are given by 
\begin{align} 
[f_1, f_2] = a f_2 + b f_3 . 
\end{align} 
For the Lie algebra $\g_{\RH^3} \oplus \R$, 
in \cite[Lemma~9]{KN2}, they also showed the following: 
for every inner product $\naiseki$ on $\g_{\RH^3} \oplus \R$, 
there exist $a>0$, $b \geq 0$, 
and an orthonormal basis $\{ f_1, f_2, f_3 \}$ 
whose bracket relations are given by 
\begin{align} 
[f_1, f_3] = a f_1 , \quad [f_2, f_3] = a f_2 + b f_4 . 
\end{align} 
These results are quite similar to ours. 
In fact, our results imply that, up to scaling, 
we can assume $a=1$ in both cases.  
\end{rem} 

We recall that Theorem~\ref{MTT} gives a procedure 
to obtain a Milnor-type theorem for any Lie algebra. 
As mentioned above, it recovers 
(and sometimes simplifies)  
some known results in \cite{Ch, HL, KN1, KN2}. 
Moreover, we emphasize that it 
will provide a new Milnor-type theorem, 
which would be useful to study the geometry of left-invariant Riemannian metrics. 

\section{Applications} 
\label{sec.4}

A Milnor-type theorem can be applied to study the geometry of left-invariant metrics. 
In this section, we see examples of such studies. 
Namely, for our two Lie algebras, we determine the possible Ricci signatures, and classify solvsolitons on them. 

Throughout this section, we identify a metric Lie algebra $(\g, \naiseki)$ 
with the simply-connected Lie group equipped with the induced left-invariant 
Riemannian metric. 

\subsection{Calculations of the curvatures} 

In this subsection, 
we calculate the Ricci operators of $\1$ and $\2$ with respect to arbitrary inner products. 

First of all, 
we recall the curvatures of a metric Lie algebra $(\g, \naiseki)$. 
Let $X, Y, Z \in \g$. 
Then the Levi-Civita connection $\nabla$ is given by 
\begin{align} 
2 \langle \nabla _X Y , Z \rangle 
= \langle [Z,X] , Y \rangle + \langle X , [Z,Y] \rangle +\langle [X,Y] , Z \rangle . 
\end{align} 
The Riemannian curvature $R$ is defined by 
\begin{align} 
R(X,Y)Z := \nabla _X \nabla _Y Z - \nabla _Y \nabla _X Z - \nabla _{[X,Y]} Z.
\end{align}
Let $\{ e_1, \ldots, e_n \}$ be an orthonormal basis of $\g$ with respect to $\naiseki$. 
The Ricci operator 
$\mathrm{Ric}_{\naiseki} : \g \rightarrow \g$ is defined by
\begin{align}
\mathrm{Ric}_{\naiseki}(X):=\sum R(X,e_i)e_i.
\end{align}

We now consider the case of $\g = \1$. 
For any inner product $\naiseki$ on it, 
we calculate the curvatures in terms of the basis given in Proposition~\ref{EMTT}. 

\begin{prop}\label{ric1}
Let $\naiseki$ be an inner product on $\1$, 
and assume that there exist $\lambda \geq 0$ and 
an orthonormal basis $\{ x_1, \ldots, x_n \}$ with respect to $\naiseki$ 
such that the bracket relations are given by 
\begin{align} 
[x_1, x_2] = x_2 + \lambda x_n . 
\end{align} 
Then the Ricci operator satisfies 
\begin{align}
\mathrm{Ric}_{\naiseki}(x_i) 
= \left\{ 
\begin{array}{@{\,}ll} 
- (1 + (\lambda^2/2) ) \, x_i & (i = 1 , 2) , \\ 
0 & (i = 3 , \ldots , n-1) , \\ 
(\lambda^2/2) \, x_n & (i = n) . 
\end{array} 
\right. 
\end{align}
\end{prop}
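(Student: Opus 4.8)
\emph{Proof proposal.} The plan is to extract the Levi-Civita connection directly from the single structural datum, assemble the curvature tensor, and sum to obtain the Ricci operator. The key simplification is that the only nonzero bracket is $[x_1,x_2]=x_2+\lambda x_n$ (with its skew-symmetric partner), so in the Koszul formula $2\langle\nabla_X Y,Z\rangle=\langle[Z,X],Y\rangle+\langle X,[Z,Y]\rangle+\langle[X,Y],Z\rangle$ a term survives only when all indices involved lie in $\{1,2,n\}$. Before computing, I would record the structural observation that $\mathfrak{h}:=\mathrm{span}\{x_1,x_2,x_n\}$ contains the derived algebra $[\g,\g]$ and is therefore an ideal, while $\mathfrak{a}:=\mathrm{span}\{x_3,\ldots,x_{n-1}\}$ is central; thus $\g=\mathfrak{h}\oplus\mathfrak{a}$ is an orthogonal direct sum of ideals. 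Since the abelian factor $\mathfrak{a}$ is flat and the connection respects this splitting, one already obtains $\mathrm{Ric}_{\naiseki}(x_i)=0$ for $i\in\{3,\ldots,n-1\}$, reducing the problem to a three-dimensional computation on $\mathfrak{h}$ (the vanishing on $\mathfrak{a}$ can equally be read off directly, since all the relevant connection components and brackets are zero).

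Next I would evaluate the Koszul formula on the triple $\{x_1,x_2,x_n\}$. This gives $\nabla_{x_2}x_2=x_1$, $\nabla_{x_1}x_2=(\lambda/2)x_n$, $\nabla_{x_2}x_1=-x_2-(\lambda/2)x_n$, $\nabla_{x_1}x_n=\nabla_{x_n}x_1=-(\lambda/2)x_2$, $\nabla_{x_2}x_n=\nabla_{x_n}x_2=(\lambda/2)x_1$, and $\nabla_{x_1}x_1=\nabla_{x_n}x_n=0$, all other components vanishing. As a consistency check I would verify $\nabla_{x_i}x_j-\nabla_{x_j}x_i=[x_i,x_j]$ for each pair.

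I would then compute $R(x_i,x_k)x_k=\nabla_{x_i}\nabla_{x_k}x_k-\nabla_{x_k}\nabla_{x_i}x_k-\nabla_{[x_i,x_k]}x_k$ for index pairs drawn from $\{1,2,n\}$ and sum over $k$. Only $R(x_1,x_2)x_2$ and $R(x_1,x_n)x_n$ contribute to $\mathrm{Ric}_{\naiseki}(x_1)$, only $R(x_2,x_1)x_1$ and $R(x_2,x_n)x_n$ to $\mathrm{Ric}_{\naiseki}(x_2)$, and only $R(x_n,x_1)x_1$ and $R(x_n,x_2)x_2$ to $\mathrm{Ric}_{\naiseki}(x_n)$. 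Collecting the coefficients yields $-(1+\lambda^2/2)$ along $x_1$ and $x_2$ and $+\lambda^2/2$ along $x_n$, as asserted.

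The main obstacle is organizational rather than conceptual: because $x_n$ appears on the right-hand side of $[x_1,x_2]$, the connection couples the $x_2$- and $x_n$-directions and is not diagonal, so each Ricci eigenvalue emerges as a sum of two curvature contributions that must be tracked and combined with care. For instance, the value $-(1+\lambda^2/2)$ on $x_1$ arises as $-(1+3\lambda^2/4)$ from the $R(x_1,x_2)x_2$ term together with $+\lambda^2/4$ from the coupling term $R(x_1,x_n)x_n$. Apart from this bookkeeping, the argument is a routine, if lengthy, application of the formulas recalled above.
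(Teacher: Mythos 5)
Your proposal is correct and follows essentially the same route as the paper: the Koszul formula yields exactly the connection components the paper lists, the same six curvature terms $R(x_i,x_k)x_k$ with the same coefficients (e.g.\ $-(1+3\lambda^2/4)$ and $\lambda^2/4$ for the $x_1$-direction), and summation gives the stated Ricci operator. Your observation that $\mathrm{span}\{x_1,x_2,x_n\}$ and the central part $\mathrm{span}\{x_3,\ldots,x_{n-1}\}$ form an orthogonal sum of ideals is just a cleaner packaging of the paper's remark that $x_3,\ldots,x_{n-1}$ have no effect on $\nabla$, so the two arguments are substantively identical.
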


\begin{proof}
First of all, we calculate the Levi-Civita connection $\nabla$. 
By the bracket relations, one can see that 
$x_3, \ldots, x_{n-1}$ do not give any effects on $\nabla$. 
Namely, 
$\langle \nabla_{x_i} x_j , x_k \rangle \neq 0$ 
implies 
$i, j, k \in \{ 1, 2, n \}$. 
A direct calculation shows that nonzero components of $\nabla$ are precisely 
\begin{align*} 
\nabla_{x_1} x_2 &= (\lambda / 2) x_n , & 
\nabla_{x_2} x_1 &= - x_2 - (\lambda / 2) x_n , \\ 
\nabla_{x_1} x_n &= - (\lambda / 2) x_2 , & 
\nabla_{x_n} x_1 &= - (\lambda / 2) x_2 , \\
\nabla_{x_2} x_2 &= x_1 , \quad & & \\
\nabla_{x_2} x_n &= (\lambda / 2) x_1 , \quad & 
\nabla_{x_n} x_2 &= (\lambda / 2) x_1 . 
\end{align*} 
Note that, by using the torsion-free condition 
$\nabla_x y - \nabla_y x = [x,y]$, 
the right-hand columns can be obtained from the left-hand ones. 

One can thus calculate the Riemannian curvatures $R$. 
The nonzero components of $R$, 
which we need for calculating the Ricci operator, are 
\begin{align*} 
R(x_1, x_2) x_2 &= - (1 + (3/4) \lambda^2) x_1, \ &
R(x_1, x_n) x_n &= (1/4) \lambda^2 x_1, \\
R(x_2, x_1) x_1 &= - (1 + (3/4) \lambda^2) x_2, \ &
R(x_2, x_n) x_n &= (1/4) \lambda^2 x_2, \\
R(x_n, x_1) x_1 &= (1/4) \lambda^2 x_n, \ &
R(x_n, x_2) x_2 &= (1/4) \lambda^2 x_n. 
\end{align*} 
By summing up these components, one can calculate the Ricci operator, 
which completes the proof of the proposition. 
\end{proof}

Next we consider the case of $\g=\2$. 
The calculation is similar to the former case. 

\begin{prop}\label{ric2}
Let $\naiseki$ be an inner product on $\2$, 
and assume that there exist 
$\lambda \geq 0$ and an orthonormal basis $\{ x_1, \ldots, x_n \}$ with respect to $\naiseki$ 
such that the bracket relations are given by 
\begin{align}
[x_1, x_2] = - \lambda x_n , \quad [x_1, x_i] = x_i \quad 
(\mbox{for $i \in \{ 3, \ldots , n \}$}) . 
\end{align}
Then the Ricci operator satisfies 
\begin{align} 
\mathrm{Ric}_{\naiseki}(x_i) 
= \left\{ 
\begin{array}{@{\,}ll} 
- (n-2 + (\lambda^2/2) ) \, x_1 & (i = 1) , \\
- (\lambda^2/2) \, x_2 + ((n-1)\lambda /2) \, x_n & (i = 2) , \\
-(n-2) \, x_i & (i = 3 , \ldots , n-1) , \\ 
((n-1)\lambda /2) \, x_2 + ((\lambda^2/2) -(n-2)) \, x_n & (i = n) . 
\end{array} 
\right. 
\end{align}
\end{prop}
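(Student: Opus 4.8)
The plan is to mimic exactly the computational strategy of Proposition~\ref{ric1}: compute the Levi-Civita connection $\nabla$ from the Koszul formula, then the relevant Riemannian curvature components $R(x_i,x_j)x_j$, and finally sum to obtain $\mathrm{Ric}_{\naiseki}$. The essential simplification I would exploit first is that most basis vectors are inert. Here the nonzero brackets involve only $x_1, x_2, x_n$ (through $[x_1,x_2]=-\lambda x_n$) and $x_1, x_i$ for $i\in\{3,\ldots,n\}$ (through $[x_1,x_i]=x_i$). So I would first argue that $x_3,\ldots,x_{n-1}$ behave symmetrically and identically (each satisfies only $[x_1,x_i]=x_i$), while $x_n$ is special because it additionally appears in $[x_1,x_2]=-\lambda x_n$.

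First I would compute $\nabla$ via $2\langle\nabla_X Y,Z\rangle = \langle[Z,X],Y\rangle+\langle X,[Z,Y]\rangle+\langle[X,Y],Z\rangle$. For the ``hyperbolic'' directions $x_i$ ($i=3,\ldots,n-1$) the relations $[x_1,x_i]=x_i$ should give the standard $\RH^{n-1}$-type connection terms, namely $\nabla_{x_1}x_i=0$, $\nabla_{x_i}x_1=-x_i$, and $\nabla_{x_i}x_i=x_1$ (up to signs I would verify). The coupling between $x_2$ and $x_n$ via $-\lambda$ produces additional terms analogous to those in Proposition~\ref{ric1}, but now $x_n$ also carries the $[x_1,x_n]=x_n$ relation, so the $x_1$--$x_n$ block is a genuine superposition of a scaling piece and the $\lambda$-skew piece. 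I expect the delicate entries to be $\nabla_{x_1}x_2$, $\nabla_{x_2}x_1$, $\nabla_{x_2}x_n$, $\nabla_{x_n}x_2$, and the various $\nabla$ involving $x_1,x_n$, since these mix both structure constants.

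Having assembled $\nabla$, I would compute $R(X,Y)Z=\nabla_X\nabla_Y Z-\nabla_Y\nabla_X Z-\nabla_{[X,Y]}Z$, keeping only the components $R(x_i,x_j)x_j$ needed for $\mathrm{Ric}_{\naiseki}(x_i)=\sum_k R(x_i,x_k)x_k$. The off-diagonal Ricci entries coupling $x_2$ and $x_n$ (the terms $((n-1)\lambda/2)x_n$ in $\mathrm{Ric}(x_2)$ and symmetrically in $\mathrm{Ric}(x_n)$) are the signature feature distinguishing this case from Proposition~\ref{ric1}; the factor $n-1$ should emerge from summing the contributions of all the hyperbolic directions $x_3,\ldots,x_n$ together with the $\lambda$-term, so I would track carefully how many indices contribute to each curvature sum.

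The main obstacle will be the bookkeeping in the $x_2$--$x_n$ block, where the skew coupling $-\lambda$ and the scaling $[x_1,x_n]=x_n$ interact: unlike Proposition~\ref{ric1}, here $x_n$ is simultaneously a ``hyperbolic'' direction and the target of the $\lambda$-bracket, so its curvature sum collects both a $-(n-2)$ piece (from the hyperbolic geometry) and a $\lambda^2/2$ piece (from the coupling), explaining the combined coefficient $(\lambda^2/2)-(n-2)$. I would verify the claimed Ricci operator by confirming that it is symmetric (the cross terms between $x_2$ and $x_n$ must match, as they do with common coefficient $(n-1)\lambda/2$) and that setting $\lambda=0$ recovers the product metric on $\g_{\RH^{n-1}}\oplus\R$, namely $\mathrm{Ric}(x_1)=\mathrm{Ric}(x_i)=-(n-2)x_i$ for the hyperbolic factor and $\mathrm{Ric}(x_2)=0$ for the flat factor. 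These two consistency checks would give me confidence that the routine but lengthy computation has been carried out correctly.
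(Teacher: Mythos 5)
Your plan follows exactly the paper's own proof: compute $\nabla$ from the Koszul formula, then the components $R(x_i,x_j)x_j$, then sum to get the Ricci operator, exploiting that $x_3,\ldots,x_{n-1}$ are interchangeable and that only the $x_1$, $x_2$, $x_n$ directions mix the two structure constants; the connection terms you state (e.g.\ $\nabla_{x_i}x_1=-x_i$, $\nabla_{x_i}x_i=x_1$) agree with the paper's, and your two consistency checks (symmetry of $\mathrm{Ric}$, and $\lambda=0$ recovering the product $\g_{\RH^{n-1}}\oplus\R$ with $\mathrm{Ric}=-(n-2)$ on the hyperbolic factor and $0$ on the flat one) are sound. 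The only caveat is that this is an outline rather than the completed computation, and one small bookkeeping point: the coefficient $(n-1)\lambda/2$ in the $x_2$--$x_n$ cross term comes from $R(x_2,x_1)x_1$ (contributing $\lambda$) plus the $n-3$ directions $x_3,\ldots,x_{n-1}$ (each contributing $\lambda/2$), not from $x_3,\ldots,x_n$.
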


\begin{proof}
First of all, we calculate nonzero components of $\nabla$.  
Let $i \in \{ 3, \ldots, n-1 \}$. 
By a similar calculation as before, we have 
\begin{align*}
\nabla_{x_1} x_2 &= - (\lambda / 2) x_n , & 
\nabla_{x_2} x_1 &= (\lambda / 2) x_n , \\ 
& & \nabla_{x_i} x_1 &= - x_i , \\ 
\nabla_{x_1} x_n &= (\lambda / 2) x_2 , & 
\nabla_{x_n} x_1 &= (\lambda / 2) x_2 - x_n , \\ 
\nabla_{x_2} x_n &= - (\lambda / 2) x_1 , & 
\nabla_{x_n} x_2 &= - (\lambda / 2) x_1 , \\ 
\nabla_{x_i} x_i &= x_1 , & 
\nabla_{x_n} x_n &= x_1 . 
\end{align*} 
Then one has
\begin{align*} 
R(x_1, x_2)\, x_2 &= - (3/4) \lambda^2 x_1 , \\ 
R(x_1, x_i)\, x_i &= - x_1 \qquad (\mbox{for $i = 3, \ldots, n-1$}) , \\ 
R(x_1, x_n)\, x_n &= - (1 - (1/4) \lambda^2) x_1 . 
\end{align*} 
By summing up them, 
one can show the assertion on $\mathrm{Ric}_{\naiseki}(x_1)$. 
Similarly, direct calculations yield that 
\begin{align*} 
R(x_2, x_1) x_1 & = - (3/4) \lambda^2 x_2 + \lambda x_n, \\
R(x_2, x_i) x_i & = (\lambda /2) x_n \qquad (\mbox{for $i=3, \ldots, n-1$}) , \\ 
R(x_2, x_n) x_n & = (1/4) \lambda^2 x_2 . 
\end{align*} 
This proves the assertion on $\mathrm{Ric}_{\naiseki}(x_2)$. 
The remaining assertions follow from
\begin{align*} 
R(x_i, x_j) x_j &= - x_i \qquad 
(\mbox{for $i=3, \ldots, n-1$ and $j \neq 2, i$}) , \\ 
R(x_n, x_1) x_1 &= \lambda x_2 - (1 - (1/4) \lambda^2) x_n, \\
R(x_n, x_2) x_2 &= (1/4) \lambda^2 x_n , \\ 
R(x_n, x_i) x_i &= (\lambda /2) x_2 - x_n \qquad (\mbox{for $i=3, \ldots, n-1$}) . 
\end{align*} 
This completes the proof of the proposition. 
\end{proof}

\subsection{Ricci signatures} 

In this subsection, 
we determine all possible 
signatures for the Ricci curvatures of $\1$ and $\2$. 
For the notational conventions, we say that 
a metric Lie algebra $(\g , \naiseki)$ has the 
\textit{Ricci signature} 
$(-, 0, +)=(m_1, m_2, m_3)$ 
if the numbers of negative, zero and positive eigenvalues of 
the Ricci operator $\mathrm{Ric}_{\naiseki}$ 
are equal to $m_1$, $m_2$ and $m_3$, respectively. 

\begin{prop}\label{sig}
We have the following$:$ 
\begin{enumerate}
\item 
For $\g=\1$, the possible Ricci signatures are 
$(-, 0, +)=(2, n-2, 0)$ and $(2, n-3, 1)$. 
\item 
For $\g=\2$, the possible Ricci signatures are 
$(-, 0, +)=(n-1, 1, 0)$ and $(n-1, 0, 1)$.
\end{enumerate}
\end{prop}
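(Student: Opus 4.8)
The plan is to read off the signatures directly from Propositions~\ref{ric1} and~\ref{ric2}, exploiting the fact that, by the Milnor-type theorems (Propositions~\ref{EMTT} and~\ref{EMTT2}), every inner product reduces up to scaling and automorphism to a single real parameter $\lambda \geq 0$. Since the Ricci signature is invariant under both scaling and the automorphisms used there, it suffices to determine the signs of the eigenvalues of $\mathrm{Ric}_{\naiseki}$ as a function of $\lambda$ alone, and then to case-split on $\lambda = 0$ versus $\lambda > 0$.

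For $\g = \1$ I would observe that the Ricci operator of Proposition~\ref{ric1} is already diagonal in the orthonormal basis $\{x_1, \ldots, x_n\}$. Its eigenvalues are $-(1 + \lambda^2/2)$ with multiplicity two (on $x_1, x_2$), which is strictly negative for every $\lambda \geq 0$; the value $0$ with multiplicity $n-3$ (on $x_3, \ldots, x_{n-1}$); and $\lambda^2/2$ (on $x_n$). Thus there are always exactly two negative eigenvalues, while $\lambda^2/2$ is zero when $\lambda = 0$ and positive when $\lambda > 0$. This yields the signature $(2, n-2, 0)$ in the first case and $(2, n-3, 1)$ in the second, and both genuinely occur since both $\lambda = 0$ and $\lambda > 0$ are realized.

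For $\g = \2$ the Ricci operator of Proposition~\ref{ric2} is no longer diagonal, and this is where the only real work lies. Here $x_1$ and $x_3, \ldots, x_{n-1}$ remain eigenvectors, with eigenvalues $-(n-2 + \lambda^2/2)$ and $-(n-2)$ respectively, all strictly negative; but the operator mixes $x_2$ and $x_n$ through the $2 \times 2$ symmetric block on $\mathrm{span}\{x_2, x_n\}$,
\begin{align*}
M = \begin{pmatrix} -\lambda^2/2 & (n-1)\lambda/2 \\ (n-1)\lambda/2 & \lambda^2/2 - (n-2) \end{pmatrix}.
\end{align*}
Rather than diagonalize, I would argue via trace and determinant: a short computation gives $\mathrm{tr}\, M = -(n-2) < 0$ and $\det M = -(\lambda^2/4)\,[\lambda^2 + (n-2)^2 + 1]$. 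Hence for $\lambda = 0$ the block has eigenvalues $0$ and $-(n-2)$, while for $\lambda > 0$ one has $\det M < 0$, so its two eigenvalues have opposite signs, i.e.\ exactly one positive and one negative.

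Collecting the contributions then gives $n-1$ negative eigenvalues in every case (one from $x_1$, $n-3$ from $x_3, \ldots, x_{n-1}$, and one from the block), together with a single remaining block eigenvalue that is zero when $\lambda = 0$ and positive when $\lambda > 0$. This produces exactly the signatures $(n-1, 1, 0)$ and $(n-1, 0, 1)$, both attained by choosing $\lambda = 0$ or $\lambda > 0$. The only nontrivial step is the sign analysis of $\det M$, which is an elementary computation and presents no genuine obstacle; everything else is bookkeeping over the two cases.
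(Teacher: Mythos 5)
Your proof is correct and follows essentially the same route as the paper's: read the diagonal eigenvalues off Propositions~\ref{ric1} and~\ref{ric2} (using scale-invariance of the signature to set $k=1$), then settle the $\mathrm{span}\{x_2,x_n\}$ block by a trace--determinant sign analysis with the case split $\lambda=0$ versus $\lambda>0$, exactly as the paper does via the characteristic polynomial of $2A$. Incidentally, your computation $\det M = -(\lambda^2/4)\left[\lambda^2+(n-2)^2+1\right]$ is the correct one, whereas the constant term in the paper's displayed polynomial $\det(tI_2-2A)=t^2+2(n-2)t-(n-1)^2\lambda^2$ contains a slip (it should be $-\lambda^2\left[\lambda^2+(n-2)^2+1\right]$); since both expressions are negative precisely when $\lambda>0$ and zero when $\lambda=0$, the signature conclusion is identical either way.
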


\begin{proof}
We show (1). 
Take any inner product $\naiseki$ on $\1$. 
Recall that, by the Milnor-type theorem (Proposition \ref{EMTT}), 
there exist $\lambda \geq 0$, 
$k > 0$ and an orthonormal basis $\{ x_1, \ldots, x_n \}$ 
with respect to $k \naiseki$ 
such that the bracket relations are given by 
\begin{align}
[x_1, x_2] = x_2 + \lambda x_n . 
\end{align} 
We can assume $k=1$ because the Ricci signature is invariant under scaling. 
By Proposition~\ref{ric1}, 
it is easy to see that 
\begin{align} 
(-, 0, +)=\left\{
\begin{array}{@{\,}ll}
(2, n-2, 0) & \mbox{if} \  \lambda = 0 , \\
(2, n-3, 1) & \mbox{if} \  \lambda \neq 0 . 
\end{array}
\right.
\end{align}
This proves (1). 
In order to show (2), take any $\naiseki$ on $\2$. 
Without loss of generality, 
we can take an orthonormal basis as described in Proposition~\ref{ric2}. 
Then $x_1, x_3, \ldots, x_{n-1}$ are eigenvectors with negative eigenvalues. 
It remains to see the $\mathrm{span} \{ x_2, x_n \}$-direction. 
For this direction, it suffices to calculate the eigenvalues of 
\begin{align} 
\displaystyle A := 
\left( 
\begin{array}{cc} 
- \lambda^2/2 & ((n-1)\lambda /2) \\ 
((n-1)\lambda /2) & ((\lambda^2 /2)-(n-2)) 
\end{array}
\right) . 
\end{align} 
Then, the eigen-polynomial of $2A$ is 
\begin{align} 
\det(t I_2 - 2A) = t^2 + 2(n-2) t - (n-1)^2 \lambda^2 . 
\end{align} 
One thus can see that the eigenvalues of $2A$ are 
$(-,+)$ if $\lambda >0$, and $(-,0)$ if $\lambda =0$. 
This completes the proof.
\end{proof}

As a corollary of Proposition~\ref{sig}, 
one can immediately see that $\1$ and $\2$ with $n \geq 3$ 
do not admit left-invariant Einstein metrics. 

Possible Ricci signatures in the cases of $n=3,4$ 
have been known by \cite{Ch, HL, KN2}. 

\subsection{Solvsolitons}

In this subsection, we classify solvsolitons on the Lie algebras $\1$ and $\2$. 
In fact, they admit solvsolitons for any $n \geq 3$. 

First of all, we recall the notion of solvsolitons following Lauret \cite{Lau11}. 

\begin{defi}
An inner product $\naiseki$ on a solvable Lie algebra $\g$ is called a 
\textit{solvsoliton} 
if there exist $c \in \mathbb{R}$ and $D \in \mathrm{Der}(\g)$ such that 
\begin{align}
\mathrm{Ric}_{\naiseki} = c I + D , 
\end{align}
where 
$I$ is the identity map of $\g$. 
\end{defi}

It is remarkable that solvsolitons give rise to left-invariant Ricci solitons. 
For deeper discussions and further results on solvsolitons, 
we refer the reader to \cite{Lau11} and references therein. 

Our Milnor-type theorems are useful to classify solvsolitons on a given Lie algebra. 
We demonstrate it by our two Lie algebras, $\g = \1$ and $\2$. 
Recall that $\naiseki_0$ is the inner product 
such that the canonical basis $\{ e_1, \ldots, e_n\}$ is orthonormal. 

\begin{prop}\label{sol}
Let $\g=\1$ or $\2$. 
Then, an inner product $\naiseki$ on $\g$ is a solvsoliton 
if and only if $[\naiseki] = [\naiseki_0]$. 
\end{prop}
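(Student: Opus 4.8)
The plan is to use that the solvsoliton condition is invariant under the $\A$-action, which reduces everything to the representatives $g_\lambda.\naiseki_0$ of Proposition~\ref{pm}. For the invariance: if $t>0$ then $\mathrm{Ric}_{t\naiseki}=t^{-1}\mathrm{Ric}_{\naiseki}$, so $\mathrm{Ric}_{\naiseki}=cI+D$ gives $\mathrm{Ric}_{t\naiseki}=(c/t)I+t^{-1}D$; and for $\varphi\in\Aut$, the map $\varphi$ is a Lie algebra isomorphism and an isometry $(\g,\naiseki)\to(\g,\varphi.\naiseki)$, whence $\mathrm{Ric}_{\varphi.\naiseki}=\varphi\,\mathrm{Ric}_{\naiseki}\,\varphi^{-1}=cI+\varphi D\varphi^{-1}$ with $\varphi D\varphi^{-1}\in\Der(\g)$. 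In either case the solvsoliton equation is preserved, so whether $\naiseki$ is a solvsoliton depends only on $[\naiseki]$. Since $g_0=I_n$ gives $[\naiseki_0]=[g_0.\naiseki_0]$, and by Proposition~\ref{pm} every orbit is $[g_\lambda.\naiseki_0]$ for some $\lambda\geq 0$, the proposition reduces to the claim that $g_\lambda.\naiseki_0$ is a solvsoliton if and only if $\lambda=0$.

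Next I would make the representative explicit. Taking $\varphi=\mathrm{id}$ and $k=1$ in the construction behind Proposition~\ref{EMTT} (resp.\ Proposition~\ref{EMTT2}), the vectors $x_i:=g_\lambda e_i$ form an orthonormal basis for $g_\lambda.\naiseki_0$ realizing the bracket relations stated there, so Proposition~\ref{ric1} (resp.\ \ref{ric2}) gives the matrix $[\mathrm{Ric}]_x$ of the Ricci operator in $\{x_1,\ldots,x_n\}$. The solvsoliton condition then asks for $c\in\R$ and $D\in\Der(\g)$ with $\mathrm{Ric}_{g_\lambda.\naiseki_0}=cI+D$.

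The decisive step is a linear-algebra comparison. Because Lemma~\ref{Der} records $\Der(\g)$ relative to the canonical basis $\{e_i\}$, I would transport the Ricci operator into that basis via the change of basis $g_\lambda=I_n-\lambda E_{n,2}$, whose inverse is $g_\lambda^{-1}=I_n+\lambda E_{n,2}$ since $E_{n,2}^2=0$; thus the matrix in $\{e_i\}$ is $g_\lambda[\mathrm{Ric}]_x g_\lambda^{-1}$, and the condition becomes $g_\lambda[\mathrm{Ric}]_x g_\lambda^{-1}-cI\in\Der(\g)$ in the shape of Lemma~\ref{Der}. Every derivation of that shape has vanishing $(1,1)$-entry, which pins $c$ down as the (unchanged) $(1,1)$-entry of the conjugated matrix; as $cI$ is diagonal, this choice cannot influence any off-diagonal position. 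The key observation is that in the basis $\{e_i\}$ the Ricci matrix carries an off-diagonal entry lying exactly where Lemma~\ref{Der} forces $0$ --- the $(n,2)$-entry for $\g=\1$, equal to $\lambda(1+\lambda^2)$, and the $(2,n)$-entry for $\g=\2$, equal to $(n-1)\lambda/2$ --- and each is a nonzero multiple of $\lambda$ for $n\geq 3$. A scalar matrix cannot remove it, so $\mathrm{Ric}_{g_\lambda.\naiseki_0}-cI$ lies in $\Der(\g)$ precisely when $\lambda=0$. Conversely, at $\lambda=0$ the Ricci operator is diagonal and subtracting the forced $c$ leaves a diagonal derivation allowed by Lemma~\ref{Der}, so $\naiseki_0$ is genuinely a solvsoliton; together with the equivalence $\lambda=0\iff[\naiseki]=[\naiseki_0]$ this gives both directions.

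I expect the main obstacle to be the conjugation bookkeeping in the last step, especially for $\g=\2$, where $[\mathrm{Ric}]_x$ already contains an off-diagonal $2\times 2$ block coupling $x_2$ and $x_n$. There one must track how the several terms produced by $g_\lambda(\cdot)g_\lambda^{-1}$ recombine and verify that the coefficient of $\lambda$ in the critical off-diagonal entry is genuinely nonzero for all $n\geq 3$, rather than cancelling; once that entry is isolated, the equivalence with $\lambda=0$ is immediate.
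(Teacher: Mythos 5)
Your proposal is correct and takes essentially the same route as the paper: both arguments reduce to the one-parameter family coming from Proposition~\ref{pm}/Proposition~\ref{EMTT}, feed in the Ricci operators of Propositions~\ref{ric1} and \ref{ric2}, and detect the obstruction as an off-diagonal entry proportional to $\lambda$ (with nonzero coefficient for $n \geq 3$) sitting in a position where derivations must vanish, so that $\mathrm{Ric}_{\naiseki} - cI \in \Der$ forces $\lambda = 0$. The only differences are organizational: the paper conjugates $\Der$ into the Milnor basis, obtaining (\ref{Derivation}), and reads the obstruction off the entry $\lambda(x_{22}-x_{nn})$ there, whereas you conjugate $\mathrm{Ric}$ into the canonical basis and compare with Lemma~\ref{Der} directly, and your explicit $\A$-invariance of the solvsoliton condition replaces the paper's application of Proposition~\ref{EMTT} to an arbitrary metric together with scaling invariance --- the two computations are mirror images of one another.
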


\begin{proof} 
We first prove the case $\g=\1$. 
Take any inner product $\naiseki$ on $\g$. 
Then, by Proposition~\ref{EMTT}, 
there exist $\lambda \geq 0$, $k > 0$, 
and an orthonormal basis $\{ x_1, \ldots, x_n \}$ 
with respect to $k \naiseki$ 
such that the bracket relations are given by 
\begin{align} 
[x_1, x_2] = x_2+ \lambda x_n . 
\end{align} 

Our first claim is that the matrix expression of $\Der$ 
with respect to the basis $\{x_1, \ldots, x_n \}$ is 
\begin{align}\label{Derivation} 
\displaystyle \Der = \left\{ \left( 
\begin{array}{cc|cccc} 
0 & 0 & 0 & \cdots & \cdots & 0 \\ 
x_{21} & x_{22} & 0 & \cdots & \cdots & 0 \\ \hline 
x_{31} & -\lambda x_{3 n} & x_{33} & \cdots & \cdots & x_{3 n} \\ 
\vdots & \vdots & \vdots & \ddots &  & \vdots \\ 
\vdots & -\lambda x_{n-1, n} & \vdots &  & \ddots & \vdots \\ 
x_{n 1} & \lambda (x_{22}-x_{n n}) & x_{n 3} & \cdots & \cdots & x_{n n} 
\end{array} 
\right) \right\} . 
\end{align}
We use Lemma~\ref{Der}, 
which gives the matrix expression of $\Der$ 
with respect to $\{ e_1, \ldots, e_n \}$. 
In order to describe the change of basis matrix, we recall that 
$g_\lambda := I_n - \lambda E_{n,2}$. 
Let us define 
\begin{align}\label{change of basis} 
(x^\prime_1 , \ldots , x^\prime_n) := (e_1, \ldots, e_n) g_\lambda . 
\end{align} 
Then, the bases $\{ x^\prime_1 , \ldots , x^\prime_n \}$ 
and $\{ x_1 , \ldots , x_n \}$ have the same bracket relations. 
Hence, it is sufficient to calculate the matrix expression of $\Der$ 
with respect to 
$\{ x^\prime_1 , \ldots , x^\prime_n \}$. 
Let $f : \g \to \g$ be a linear map, and 
denote by $D$ the matrix expression of $f$ with respect to $\{ e_1 , \ldots , e_n \}$. 
Then, we have
\begin{align}
( f(x_1^{\prime}), \ldots, f(x_n^{\prime}) ) 
&= ( f(e_1), \ldots, f(e_n) ) g_{\lambda} 
= (x_1^{\prime}, \ldots, x_n^{\prime}) g_{\lambda}^{-1} D g_{\lambda} . 
\end{align} 
That is, $g_{\lambda}^{-1} D g_{\lambda}$ 
is the matrix expression of $f$ with respect to 
$\{ x_1^\prime , \ldots, x_n^\prime \}$. 
By using this, one can complete the proof of our claim. 

We show the ``only if''-part. 
Assume that $\naiseki$ is a solvsoliton. 
We can assume $k=1$ because the solvsoliton is preserved by scaling. 
Hence, by Proposition~\ref{ric1}, 
the matrix expression of $\mathrm{Ric}_{\naiseki}$ is 
\begin{align} 
\label{eq:ric1} 
\left( 
\begin{array}{cc|cccc} 
-1-(\lambda^2/2) & & & & & \\ 
& -1-(\lambda^2/2) & & & & \\ \hline 
& & 0 & & & \\ 
& & & \ddots & & \\ 
& & & & 0 & \\ 
& & & & & \lambda^2/2 
\end{array} 
\right) . 
\end{align}
By assumption on $\naiseki$, 
there exist $c \in \mathbb{R}$ and $D \in \Der$ such that 
\begin{align} 
\mathrm{Ric}_{\naiseki} = c I + D . 
\end{align} 
Then, by comparing (\ref{eq:ric1}) and (\ref{Derivation}), 
we obtain 
$c = -1-(\lambda^2/2)$, and $\lambda = 0$. 
This yields that two bases $\{ x_1 , \ldots , x_n \}$ and $\{ e_1 , \ldots , e_n \}$ 
have the same bracket relations. 
Recall that these bases are orthonormal with respect to 
$\naiseki$ and $\naiseki_0$, respectively. 
This concludes $[\naiseki]=[\naiseki_0]$. 

We show the ``if''-part. 
Assume that $[\naiseki]=[\naiseki_0]$. 
Then, one can take the basis $\{ x_1, \ldots, x_n \}$ so that $\lambda=0$. 
Hence, by substituting $\lambda=0$ into (\ref{eq:ric1}) and (\ref{Derivation}), 
one can easily see that $\naiseki$ is a solvsoliton with $c=-1$. 

The proof of the case $\g=\2$ is similar to the former case. 
Let $\{x_1, \ldots, x_n \}$ be a basis whose bracket relations are given in Proposition~\ref{EMTT2}. 
Then, by the same argument as above, 
the matrix expression of $\Der$ with respect to this basis 
coincides with (\ref{Derivation}). 
Hence, by using Proposition~\ref{ric2}, 
one can classify solvsolitons on $\g=\2$. 
\end{proof}

\end{document}